\def\dx{\partial_x}
\def\dy{\partial_y}
\def\m{m}
\def\b{b}
\def\h{h}
\def\n{n}
\def\div{\operatorname{div}}
\def\eps{\epsilon}
\def\RR{\mathbb{R}}
\DeclareSymbolFont{CMletters}{OML}{cmm}{m}{it}
\DeclareMathSymbol{\nu}{\mathord}{CMletters}{23}
\DeclareMathSymbol{v}{\mathord}{CMletters}{`v}
\DeclareMathSymbol{u}{\mathord}{CMletters}{`u}
\begin{document}
\title*{On the stability of harmonic mortar methods with application to electric machines}
\titlerunning{Inf-sup stability of harmonic mortar methods}
\author{H. Egger\and
M. Harutyunyan \and
M. Merkel  \and
S. Sch\"ops }
\institute{Herbert Egger \at Numerical analysis and Scientific Computing, Technische Universit\"at Darmstadt, Dolivostr. 15, 64293 Darmstadt, Germany, 
\email{herbert.egger@tu-darmstadt.de}
  \and M. Harutyunan, M. Merkel, S. Sch\"ops \at Computational Electromagnetis, Technische Universit\"at Darmstadt, Schlossgartenstr. 8,
64289 Darmstadt, Germany, 
  \email{mane.harutyunyan@tu-darmstadt.de, melina.merkel@tu-darmstadt.de, sebastian.schoeps@tu-darmstadt.de}
}

\maketitle

\abstract*{Harmonic stator-rotor coupling offers a promising approach for the interconnection of rotating subsystems in the simulation of electric machines. This paper studies the stability of discretization schemes based on harmonic coupling in the framework of mortar methods. A general criterion is derived that allows to ensure the relevant inf-sup stability condition for a variety of specific discretization approaches, including finite-element methods and isogeometric analysis. The validity and sharpness of the theoretical results is demonstrated by numerical tests.}

\abstract{Harmonic stator-rotor coupling offers a promising approach for the interconnection of rotating subsystems in the simulation of electric machines. This paper studies the stability of discretization schemes based on harmonic coupling in the framework of mortar methods for Poisson-like problems. A general criterion is derived that allows to ensure the relevant inf-sup stability condition for a variety of specific discretization approaches, including finite-element methods and isogeometric analysis with harmonic mortar coupling. The validity and sharpness of the theoretical results is demonstrated by numerical tests.}

\section{Introduction} \label{harutyunyan:sec:1}

Electric drives naturally consist of different subdomains, i.e. the stator and rotor, which move relative to each other. 
The time-varying geometry and nonlinearities caused by saturation effects formally require a time-domain analysis, which is often realized by solving a sequence of quasi-stationary problems at different working points. 
Several strategies have been proposed for the simulation of the corresponding equations of magnetostatics and, in particular, for the coupling of the fields across the air gap between stator and rotor.
As it is common practice, see e.g. \cite{DeGersem04,Gyselinck03,Lange10}, we consider a two dimensional regime, in which the unknown fields are described by the axial component of the magnetic vector potential.  
The governing system then consists of two Poisson-like problems for the stator and the rotor, which can be coupled via Lagrange multipliers. 
Such domain decompositions of mortar methods, which couple subdomains via Lagrange multipliers, have been investigated intensively in the literature \cite{BenBelgacem99,Bernardi94,Braess99,Wohlmuth00}; see \cite{Buffa01,DeGersem04} for results concerning electric machines. 
It is well-known that a careful choice of approximation spaces is required to obtain stable disretization schemes for underlying saddlepoint problems \cite{Brezzi74,Raviart77}; appropriate stabilization \cite{Hansbo05} could be used as an alternative approach. 

In this paper, we investigate the stability of mortar discretizations using harmonic functions for the stator-rotor coupling \cite{DeGersem04,Bontinck18}. We discuss in detail the discrete inf-sup condition which is necessary and sufficient to guarantee the stability of such approximations. 
We provide a simple criterion for the maximal number of harmonics used as Lagrange multipliers depending on the mesh size and polynomial degree of the subdomain discretizations which guarantees the stability of the scheme. Our analysis applies to the harmonic coupling of various discretization methods, e.g., obtained by isogeometric analysis (IGA) \cite{Hughes_2005aa,Bontinck18,Brivadis15}, and can in principle be extended to other Lagrange multiplier spaces.

The remainder of this note is organized as follows: 
In Section~\ref{harutyunyan:sec:2}, we introduce the model problem to be considered and we summarize some well-known results about its analysis and discretization. 
In Section~\ref{harutyunyan:sec:3}, we then turn to the harmonic stator-rotor coupling, and we state and prove our main results. 
Section~\ref{harutyunyan:sec:4} is concerned with numerical tests, in which we demonstrate the validity of our stability criterion for low and high order discretizations based on IGA.

\vspace*{-1em}

\section{Model problem} \label{harutyunyan:sec:2} 

We consider a typical geometric setup that consists of two subdomains $\Omega_1$, $\Omega_2$ representing, respectively, the stator and rotor, separated by a small air gap which contains the interface $\Gamma = \partial\Omega_1 \cap \partial\Omega_2$; see Figure~\ref{harutyunyan:fig:geometry}. 
Let $\Sigma_\ell = \partial\Omega_\ell \setminus \Gamma$, $\ell=1,2$, be the remaining parts of the subdomain boundaries and $f_\ell = f|_{\Omega_\ell}$ denote the restriction of a function $f$ defined on $\Omega_1 \cup \Omega_2$ to the subdomain $\Omega_\ell$. 
We then consider the following elliptic interface problem:
Inside the two subdomains, we require 
\begin{alignat}{5}
    -\div (\nu_\ell  \nabla u_\ell) &= j_\ell, \qquad && \text{in } \Omega_\ell, \label{harutyunyan:eq:sys1}\\
    u_\ell &= 0,    \qquad && \text{on } \Sigma_\ell, \label{harutyunyan:eq:sys2}
\end{alignat}
where $u$ denotes the $z$-component of the magnetic vector potential, $\nu$ the magnetic reluctivity, and $j=j_s+\div \m^\perp$ a generalized current density with $j_s$ denoting the $z$-component of the source currents and $\m^\perp=(\m_y,-\m_x)$ the rotated magnetization vector. 
The corresponding in-plane components of the magnetic flux density and field strength are given by $\b=(\dy u,-\dx u)=\nabla^\perp u$ and $\h=\nu \b$, respectively. 
The coupling of the fields across the interface $\Gamma$ is accomplished by the conditions
\begin{alignat}{5}
    u_1 &= u_2, \qquad && \text{on } \Gamma, \label{harutyunyan:eq:sys3}\\
    \n \cdot (\nu_1 \nabla u_1) &= \n \cdot (\nu_2 \nabla u_2), \qquad && \text{on } \Gamma,  \label{harutyunyan:eq:sys4}
\end{alignat}
which express the normal continuity of $\b$ and the tangential continuity of $\h$, respectively.
\begin{figure}[t]
    \hspace*{-0.5em}
    \centering
    \includegraphics[width=0.45\textwidth]{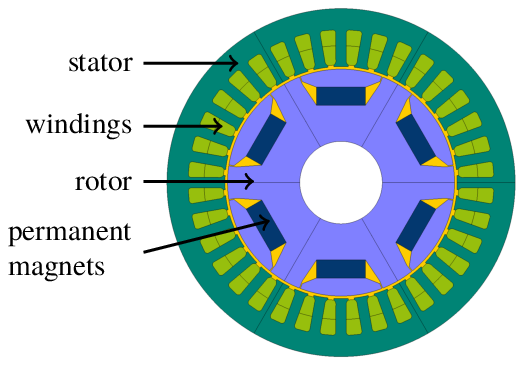}
\hspace*{3.0em}
\includegraphics[width=0.31\textwidth]{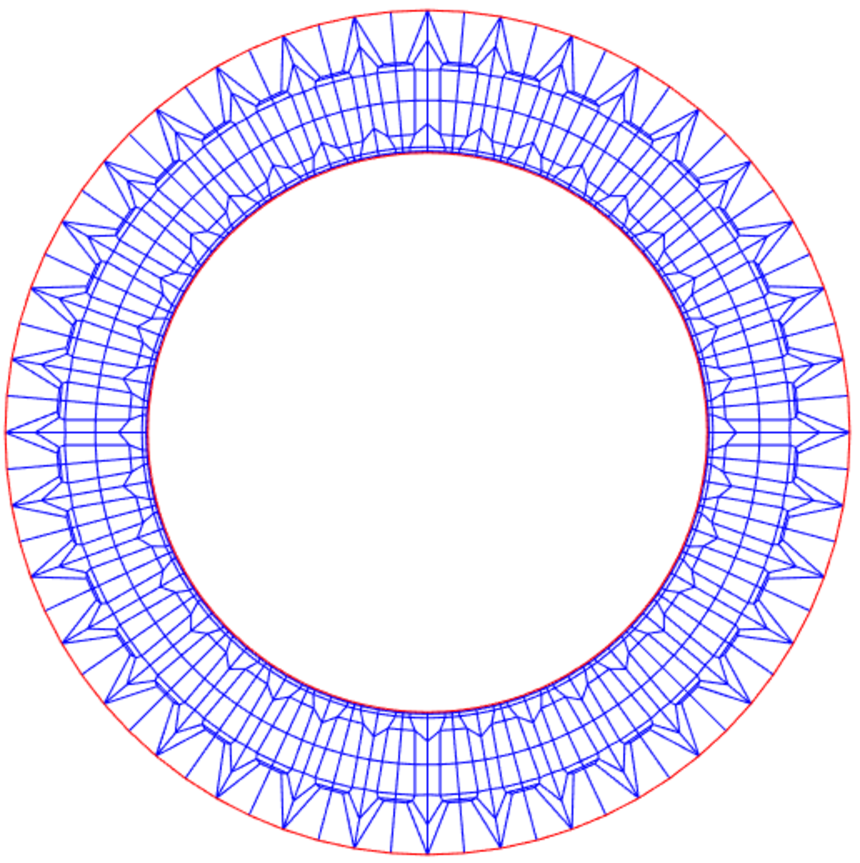}
\caption{Typical structure of a 6-pole permanent magnet synchronous machine (left) and the coarsest mesh of the stator domain as used in our numerical tests (right).\label{harutyunyan:fig:geometry}}
\end{figure}
Here $\n=\n_2$ is the unit normal vector at $\Gamma$ pointing from $\Omega_2$ to $\Omega_1$.
Without further noting, we assume that $\Omega_\ell$ are bounded domains with smooth boundaries $\Sigma_\ell$ and $\Gamma$, having non-zero measure, and that $\nu$ is bounded from above and below by positive constants  $\underline \nu$, $\overline \nu$, i.e.,  $ \underline \nu \le \nu(x) \le \overline \nu$ for all $x \in \Omega_1 \cup \Omega_2$. 

The weak formulation of the interface problem \eqref{harutyunyan:eq:sys1}--\eqref{harutyunyan:eq:sys4} then reads as follows: 
Find $u \in V=\{v \in H^1(\Omega_1 \cup \Omega_2) : v|_{\Sigma_\ell}=0\}$ and $\lambda \in M=H^{-1/2}(\Gamma)$ such that 
\begin{alignat}{5}
    (\nu \nabla u, \nabla v)_{\Omega_1 \cup \Omega_2} + \langle \lambda, [v]\rangle_\Gamma &= \langle j, v \rangle_{\Omega_1 \cup \Omega_2} \qquad && \forall v \in V, \label{harutyunyan:eq:weak1}\\
    \langle [u], \mu\rangle_\Gamma &= 0 \qquad && \forall \mu \in M. \label{harutyunyan:eq:weak2}
\end{alignat}
Here $(a,b)_{\Omega_1 \cup \Omega_2}=\int_{\Omega_1} a \cdot b \, dx + \int_{\Omega_2} a \cdot b \, dx$ is the usual scalar product of functions $a,b \in L^2(\Omega_1 \cup \Omega_2)$, while $\langle a,b\rangle_{\Omega_1 \cup \Omega_2}$ and $\langle a,\b\rangle_{\Gamma}$ are the duality products on $V \times V'$ and $M \times M'$, respectively, with $V'$, $M'$ denoting the dual spaces of $V$ and $M$. 
Furthermore, $H^1(\Omega_1\cup\Omega_2)$ denotes the space of piecewise smooth functions $v$ with restrictions $v_\ell=v|_{\Omega_\ell} \in H^1(\Omega_\ell)$ for $\ell=1,2$ and $[v]=v_1-v_2$ denotes the jump of such functions across the interface $\Gamma$. 
Under the above assumptions, we have
\begin{lemma} \label{harutyunyan:lem:weak}
For any $j_s \in L^2(\Omega)$ and $m \in L^2(\Omega)^2$, the variational problem  \eqref{harutyunyan:eq:weak1}--\eqref{harutyunyan:eq:weak2} with $j = j_s + \div m^\perp$ has a unique solution $(u,\lambda) \in V \times M$ and there holds  
\begin{align*}
    \|u\|_{H^1(\Omega_1 \cup \Omega_2)} + \|\lambda\|_{H^{-1/2}(\Gamma)} 
       \le C \big( \|j_s\|_{L^2(\Omega_1 \cup \Omega_2)} + \|m\|_{L^2(\Omega_1 \cup \Omega_2)}).
\end{align*}
Moreover, $u$ is the unique weak solution of \eqref{harutyunyan:eq:sys1}--\eqref{harutyunyan:eq:sys4} and $\lambda=\n \cdot (\nu \nabla u_\ell)$ the associated tangential component of the magnetic field strength $\h$ at the interface.
\end{lemma}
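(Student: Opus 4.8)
The plan is to read \eqref{harutyunyan:eq:weak1}--\eqref{harutyunyan:eq:weak2} as a saddle-point problem with the symmetric form $a(u,v)=(\nu\nabla u,\nabla v)_{\Omega_1\cup\Omega_2}$ on $V\times V$ and the coupling form $b(v,\mu)=\langle\mu,[v]\rangle_\Gamma$ on $V\times M$, and to establish well-posedness through the Brezzi theory. First I would dispose of the two structural hypotheses that are essentially routine. Continuity of $a$ is immediate from $\nu\le\overline\nu$, and continuity of $b$ follows from the $H^{-1/2}(\Gamma)$--$H^{1/2}(\Gamma)$ duality together with the trace estimate $\|[v]\|_{H^{1/2}(\Gamma)}\le C\|v\|_{H^1(\Omega_1\cup\Omega_2)}$. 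For coercivity I would exploit that every $v\in V$ vanishes on $\Sigma_\ell$, which has non-zero measure, so that the Poincaré--Friedrichs inequality holds on each subdomain and yields $a(v,v)\ge\underline\nu\sum_\ell\|\nabla v_\ell\|_{L^2(\Omega_\ell)}^2\ge\alpha\|v\|_{H^1(\Omega_1\cup\Omega_2)}^2$ on all of $V$, in particular on the kernel of $b$.

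The genuine work is the inf-sup condition for $b$: for every $\mu\in M$ I must produce $v\in V$ with $\langle\mu,[v]\rangle_\Gamma\ge\beta\|\mu\|_{H^{-1/2}(\Gamma)}\|v\|_{H^1(\Omega_1\cup\Omega_2)}$. Given $\mu$, I would first use the Hilbert-space duality to pick a realizer $\phi\in H^{1/2}(\Gamma)$ with $\langle\mu,\phi\rangle_\Gamma=\|\mu\|_{H^{-1/2}(\Gamma)}^2$ and $\|\phi\|_{H^{1/2}(\Gamma)}=\|\mu\|_{H^{-1/2}(\Gamma)}$. I would then construct $v$ supported in the single subdomain $\Omega_1$, i.e. $v_2\equiv0$, so that $[v]=v_1|_\Gamma$. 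Since $\Gamma$ is a closed curve disjoint from $\Sigma_1$, the trace operator on $\Omega_1$ admits a bounded right inverse into the functions of $H^1(\Omega_1)$ vanishing on $\Sigma_1$; this supplies $v_1$ with $v_1|_\Gamma=\phi$, $v_1|_{\Sigma_1}=0$ and $\|v_1\|_{H^1(\Omega_1)}\le C\|\phi\|_{H^{1/2}(\Gamma)}$. Then $b(v,\mu)=\langle\mu,\phi\rangle_\Gamma=\|\mu\|_{H^{-1/2}(\Gamma)}^2\ge C^{-1}\|\mu\|_{H^{-1/2}(\Gamma)}\|v\|_{H^1(\Omega_1\cup\Omega_2)}$, giving $\beta=C^{-1}$. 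I expect this extension step---prescribing the trace on $\Gamma$ while enforcing the homogeneous condition on $\Sigma_1$---to be the delicate point, with the geometric fact that $\Gamma$ and $\Sigma_1$ are separated closed curves being what makes the construction clean.

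With the three hypotheses of Brezzi's theorem in hand, existence and uniqueness of $(u,\lambda)\in V\times M$ follow together with $\|u\|_{H^1(\Omega_1\cup\Omega_2)}+\|\lambda\|_{H^{-1/2}(\Gamma)}\le C\|j\|_{V'}$. To reach the stated right-hand side I would interpret the generalized current density by integration by parts, $\langle j,v\rangle_{\Omega_1\cup\Omega_2}=(j_s,v)_{\Omega_1\cup\Omega_2}+(m,\nabla^\perp v)_{\Omega_1\cup\Omega_2}$, which is bounded by $(\|j_s\|_{L^2(\Omega_1\cup\Omega_2)}+\|m\|_{L^2(\Omega_1\cup\Omega_2)})\|v\|_{H^1(\Omega_1\cup\Omega_2)}$ and hence controls $\|j\|_{V'}$ by the asserted data norms.

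Finally, to recover the classical interpretation I would extract the strong form from the weak one. Testing \eqref{harutyunyan:eq:weak1} with $v\in V$ continuous across $\Gamma$ (so $[v]=0$) removes the multiplier and yields \eqref{harutyunyan:eq:sys1}, while \eqref{harutyunyan:eq:weak2} forces $[u]=0$, that is \eqref{harutyunyan:eq:sys3}. Integrating \eqref{harutyunyan:eq:weak1} by parts on each subdomain and inserting the equation reduces it to an interface identity of the form $\langle\lambda-\n\cdot(\nu_1\nabla u_1),v_1\rangle_\Gamma+\langle\n\cdot(\nu_2\nabla u_2)-\lambda,v_2\rangle_\Gamma=0$; varying $v_1$ and $v_2$ independently gives both $\lambda=\n\cdot(\nu_1\nabla u_1)=\n\cdot(\nu_2\nabla u_2)$, which is the flux continuity \eqref{harutyunyan:eq:sys4} and identifies $\lambda$ with the tangential magnetic field strength at $\Gamma$. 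Uniqueness of the weak solution of \eqref{harutyunyan:eq:sys1}--\eqref{harutyunyan:eq:sys4}, classical for the coercive problem on the glued domain, then confirms that the $u$ produced above is exactly that solution.
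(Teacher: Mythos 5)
Your proposal is correct, and it follows the same overall architecture as the paper (Brezzi's saddle-point theory with a lifting supported in $\Omega_1$ and extended by zero to $\Omega_2$), but it proves the crucial inf-sup condition by a genuinely different construction. The paper, following Raviart--Thomas, takes as the lifting the solution $z_1$ of the auxiliary mixed boundary value problem $-\Delta z_1=0$ in $\Omega_1$, $z_1=0$ on $\Sigma_1$, $\partial_n z_1=\mu$ on $\Gamma$, and reads off $\langle\mu,z_1\rangle_\Gamma\ge c_1\|\mu\|_{H^{-1/2}(\Gamma)}^2$ and $\|z_1\|_{H^1(\Omega_1)}\le c_2\|\mu\|_{H^{-1/2}(\Gamma)}$ from elliptic theory. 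You instead pick the Riesz realizer $\phi\in H^{1/2}(\Gamma)$ of the dual norm and lift it by a bounded right inverse of the trace operator (extension plus a cutoff vanishing near $\Sigma_1$, which is legitimate precisely because $\Gamma$ and $\Sigma_1$ are disjoint closed curves in this geometry, as you note). Your route is more elementary and self-contained --- it needs only the trace theorem and a cutoff rather than well-posedness of the mixed Neumann problem --- and you additionally spell out steps the paper delegates to citations: continuity and coercivity of the forms (coercivity on all of $V$ via Poincar\'e, thanks to the nonzero measure of $\Sigma_\ell$), the bound $\langle j,v\rangle=(j_s,v)+(m,\nabla^\perp v)$ giving the stated data norms (your integration-by-parts identity $-m^\perp\cdot\nabla v=m\cdot\nabla^\perp v$ checks out), and the sign-correct recovery of \eqref{harutyunyan:eq:sys1}--\eqref{harutyunyan:eq:sys4} with $\lambda=\n\cdot(\nu_\ell\nabla u_\ell)$ by varying $v_1$, $v_2$ independently. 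What the paper's Neumann-lifting buys in exchange is constructive leverage: the same extremal element $z_1$ is reused in the proof of Theorem~\ref{harutyunyan:thm:main} and, via the Neumann-to-Dirichlet map $S_1$, in Section~\ref{harutyunyan:sec:4} to compute the inf-sup constant $\beta=\sqrt{R_1\ln(R_2/R_1)}$ analytically for the annulus, whereas your Riesz realizer is non-constructive. One minor caveat: for merely $m\in L^2$, the identification of $\lambda$ as the physical tangential field strength tacitly absorbs the magnetization boundary term into the flux, but your treatment here matches the paper's own level of rigor.
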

\begin{remark} \label{harutyunyan:rem:infsup}
The result is well-known and a similar assertion can already be found in the work of Babuska \cite{Babuska73}. 
Using Brezzi's theory for saddlepoint problems \cite{Brezzi74}, the essential ingredient turns out to be the inf-sup stability condition
\begin{align} \label{harutyunyan:eq:infsup}
    \sup_{v \in V} \frac{\langle \mu,[v]\rangle_\Gamma}{\|v\|_{H^1(\Omega_1 \cup \Omega_2)}} \ge \beta \|\mu\|_{H^{-1/2}(\Gamma)} 
\end{align}
which has to hold for all $\mu \in M$ with a uniform constant $\beta>0$.
Following \cite{Raviart77}, condition \eqref{harutyunyan:eq:infsup} can be proven as follows: 
Let $z_1 \in H^1(\Omega_1)$ be the weak solution of the mixed boundary value problem
\begin{align} \label{harutyunyan:eq:z1}
    -\Delta z_1 = 0 \quad \text{in } \Omega_1  
\quad \text{with} \quad 
    z_1 = 0 \quad \text{on } \Sigma_1 
\quad \text{and} \quad 
    \partial_n z_1 = \mu \quad \text{on } \Gamma.    
\end{align}
Then by standard arguments for elliptic problems \cite{Babuska73,BenBelgacem99}, one can show that 
\begin{align*}
    \|z_1\|_{H^1(\Omega_1)} \le c_2 \|\mu\|_{H^{-1/2}(\Gamma)} 
\quad \text{and} \quad 
    \langle \mu,z_1\rangle_\Gamma \ge c_1 \|\mu\|_{H^{-1/2}(\Gamma)}^2,
\end{align*}
with positive constants $c_1$, $c_2$ only depending on $\Omega_1$, $\Sigma_1$, and $\Gamma$.\\
Now define $z \in H^1(\Omega_1 \cup \Omega_2)$ by $z=z_1$ on $\Omega_1$ and $z=0$ on $\Omega_2$. Then 
\begin{align*}
    \langle \mu,[z]\rangle_\Gamma = \langle \mu, z_1\rangle_\Gamma\ge c_1 \|\mu\|^2_{H^{-1/2}(\Gamma)} \ge \frac{c_1}{c_2} \|\mu\|_{H^{-1/2}(\Gamma)} \|z_1\|_{H^1(\Omega_1)}.  
\end{align*}
The result now follows with $\beta=\frac{c_1}{c_2}$ by noting that $\|z\|_{H^1(\Omega_1\cup\Omega_2)} = \|z_1\|_{H^1(\Omega_1)}$. \qed
\end{remark}

\noindent
\textbf{Discretization.}
As a next step, we now consider Galerkin approximations of the weak formulation \eqref{harutyunyan:eq:weak1}--\eqref{harutyunyan:eq:weak2}:
Find $u_h \in V_h \subset V$ and $\lambda_N \in M_N \subset M$ such that 
\begin{alignat}{5}
    (\nu \nabla u_h, \nabla v_h)_{\Omega_1 \cup \Omega_2} + \langle \lambda_N, [v_h]\rangle_\Gamma &= \langle j, v_h \rangle_{\Omega_1 \cup \Omega_2} \qquad && \forall v_h \in V_h, \label{harutyunyan:eq:weak1h}\\
    \langle [u_h], \mu_N\rangle_\Gamma &= 0 \qquad && \forall \mu_N \in M_N. \label{harutyunyan:eq:weak2h}
\end{alignat}
The subspaces $V_h$, $M_N$ are always assumed to be finite dimensional in the following.
\begin{lemma} \label{harutyunyan:lem:weakh}
Let the conditions of Lemma~\ref{harutyunyan:lem:weak} be valid and assume that 
\begin{align} \label{harutyunyan:eq:infsuph}
    \sup_{v_h \in V_h} \frac{\langle \mu_N,[v_h]\rangle_\Gamma}{\|v_h\|_{H^1(\Omega_1 \cup \Omega_2)}} \ge \beta' \|\mu_N\|_{H^{-1/2}(\Gamma)} 
\end{align}
for all $\mu_N \in M_N$ with some constant $\beta'>0$. 
Then the variational problem \eqref{harutyunyan:eq:weak1h}--\eqref{harutyunyan:eq:weak2h} has a unique solution $u_h \in V_h$, $\lambda_N \in M_N$. Furthermore
\begin{align} \label{harutyunyan:eq:error}
    \|u-u_h&\|_{H^1(\Omega_1 \cup \Omega_2)} + \|\lambda - \lambda_N\|_{H^{-1/2}(\Gamma)} \\
    &\le C \,  \big( \inf_{v_h \in V_h} \|u-v_h\|_{H^1(\Omega_1 \cup \Omega_2)} + \inf_{\mu_N \in M_N}\|\lambda - \mu_N\|_{H^{-1/2(\Gamma)}} \big) \notag
\end{align}
with $C$ depending only on $\beta'$ in condition \eqref{harutyunyan:eq:infsuph}, the bounds for $\nu$ and the geometry. 
\end{lemma}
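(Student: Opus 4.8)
The plan is to recognize Lemma~\ref{harutyunyan:lem:weakh} as a standard consequence of Brezzi's theory for discrete saddlepoint problems, so the task reduces to verifying the two hypotheses of that theory and then invoking the abstract error estimate. First I would identify the bilinear forms $a(u,v)=(\nu\nabla u,\nabla v)_{\Omega_1\cup\Omega_2}$ and $b(v,\mu)=\langle\mu,[v]\rangle_\Gamma$, acting on $V_h\times V_h$ and $V_h\times M_N$ respectively. The continuity of $a$ and $b$ is immediate from Cauchy--Schwarz together with the upper bound $\overline\nu$ on $\nu$ and the trace theorem controlling $\|[v_h]\|_{H^{1/2}(\Gamma)}$ by $\|v_h\|_{H^1(\Omega_1\cup\Omega_2)}$; these continuity constants are uniform in $h$ and $N$.

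The two structural conditions needed are coercivity of $a$ on the discrete kernel and the discrete inf-sup condition for $b$. The discrete inf-sup condition is precisely the hypothesis \eqref{harutyunyan:eq:infsuph} assumed in the statement, so nothing is required there beyond naming it. For coercivity, I would let $Z_h=\{v_h\in V_h:\langle[v_h],\mu_N\rangle_\Gamma=0\ \forall\mu_N\in M_N\}$ be the discrete kernel and argue that $a$ is coercive on all of $V$, hence on $Z_h$: since $u|_{\Sigma_\ell}=0$ and each $\Sigma_\ell$ has non-zero measure, a Poincar\'e--Friedrichs inequality applied on each subdomain gives $\|v\|_{H^1(\Omega_1\cup\Omega_2)}^2\le C_P\,\|\nabla v\|_{L^2(\Omega_1\cup\Omega_2)}^2$, and combined with the lower bound $\underline\nu$ this yields $a(v,v)\ge\alpha\|v\|_{H^1(\Omega_1\cup\Omega_2)}^2$ with $\alpha$ independent of the discretization. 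Because coercivity holds on the full space $V$, it holds a fortiori on the subspace $Z_h$, and the coercivity constant does not degenerate with $h$ or $N$.

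With uniform continuity, uniform coercivity on the kernel, and the assumed uniform inf-sup constant $\beta'$ in hand, the abstract Brezzi theorem \cite{Brezzi74} delivers existence and uniqueness of the discrete pair $(u_h,\lambda_N)$ together with the quasi-optimal estimate: there is a constant $C$, depending only on the continuity constants, $\alpha$, and $\beta'$, such that
\begin{align*}
    \|u-u_h\|_{H^1(\Omega_1\cup\Omega_2)} + \|\lambda-\lambda_N\|_{H^{-1/2}(\Gamma)}
    \le C\Big(\inf_{v_h\in V_h}\|u-v_h\|_{H^1(\Omega_1\cup\Omega_2)} + \inf_{\mu_N\in M_N}\|\lambda-\mu_N\|_{H^{-1/2}(\Gamma)}\Big),
\end{align*}
which is exactly \eqref{harutyunyan:eq:error}. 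Since $\alpha$ and the continuity constants depend only on the bounds for $\nu$ and the geometry, the overall constant $C$ depends only on these quantities and on $\beta'$, as claimed.

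The only genuine subtlety, and the step I expect to demand the most care, is confirming that coercivity really is inherited on $Z_h$ with a uniform constant. The clean way around any delicacy here is the observation just made, that $a$ is already coercive on the entire space $V$ thanks to the homogeneous Dirichlet condition on $\Sigma_\ell$; this sidesteps the more intricate arguments one needs in problems where coercivity holds only on the kernel and must be transferred to the discrete kernel via a stable Fortin-type decomposition. Everything else is bookkeeping: invoking the trace theorem for continuity of $b$ and quoting the abstract saddlepoint result verbatim.
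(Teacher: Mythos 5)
Your proposal is correct and follows essentially the same route as the paper, which disposes of this lemma in Remark~\ref{harutyunyan:rem:weakh} by invoking Brezzi's saddlepoint theory \cite{Brezzi74} for existence and uniqueness and Galerkin orthogonality with standard arguments for the quasi-optimal estimate \eqref{harutyunyan:eq:error}. You merely spell out the details the paper leaves implicit---continuity of $a$ and $b$, coercivity of $a$ on all of $V$ via Poincar\'e--Friedrichs (correctly noting this sidesteps any kernel-transfer subtlety), and the assumed discrete inf-sup condition---so there is nothing to object to.
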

\begin{remark} \label{harutyunyan:rem:weakh}
The conditions required for the proof of the corresponding result on the continuous level, except the inf-sup stability condition, are inherited by the Galerkin approximation. The existence of a unique solution can thus again be deduced from Brezzi's saddlepoint theory \cite{Brezzi74}. 
The error estimate \eqref{harutyunyan:eq:error} follows from Galerkin orthogonality and standard arguments; we refer to \cite{Braess99,Brezzi74} for details. 
Hence any choice of approximation spaces $V_h$, $M_N$ that allows to prove the discrete inf-sup stability condition \eqref{harutyunyan:eq:infsuph} will lead to a stable discretization with quasi-optimal error estimates.
\end{remark}

\section{Harmonic stator-rotor coupling} \label{harutyunyan:sec:3}

We now consider a particular class of Galerkin approximations  \eqref{harutyunyan:eq:weak1h}--\eqref{harutyunyan:eq:weak2h} in which $V_h$ is constructed by piecewise polynomials, while the Lagrange multiplier space $M_N$ is defined by trigonometric polynomials. Our analysis in particular also covers the harmonic-coupling of the methods considered in \cite{Bontinck18,DeGersem04}. 

Using polar coordinates, the computational domain $\Omega = \Omega_1 \cup \Omega_2$ can be represented as the image of a rectangle  $\widehat \Omega$ under a mapping $F: \widehat \Omega \to \Omega$; see Figure~\ref{harutyunyan:fig:mapping}.
\begin{figure}[t]
    \vspace*{-1.0em}
    \centering
    \includegraphics{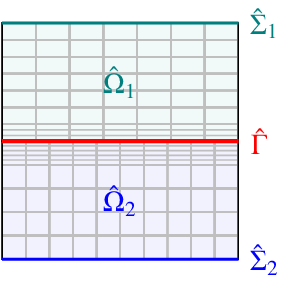}
    \hspace*{6em}
    \includegraphics{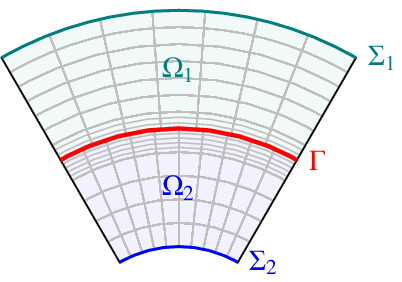}
    \vspace*{-0.5em}
\caption{Sketch of a subset of the rectangular reference domain $\hat \Omega$ and its mesh (left) and the physical domain $\Omega=F(\hat \Omega)$ and mesh obtained after mapping. The boundaries on the left and right are only introduced for the illustration but not present in our application. \label{harutyunyan:fig:mapping}}
    \vspace*{-1.0em}
\end{figure}
Now let $\widehat T_h$ denote a shape-regular partition of $\widehat \Omega_1 \cup \widehat \Omega_2$ into triangles and/or rectangles of size $h$. The meshes of the two sub-domains are assumed to be geometrically conforming, but they may be non-matching across the interface. 
We denote by $P_k(\widehat T_h)$ the space of piecewise polynomials over $\widehat T_h$ of degree $\le k$ and by $\widehat M_N = \operatorname{span}\{\sin(n \pi \xi), \cos(n \pi \xi) : 0 \le n \le N\}$ 
the spaces of trigonometric polynomials of degree $\le N$. 
We then choose the approximation spaces $V_h$, $M_N$ s.t.
\begin{align}
    M_N = F(\widehat M_N) 
\qquad \text{and} \qquad 
    V_h = V_h|_{\Omega_1} \cup V_h|_{\Omega_2} \subset F(P^k(T_h)) \cap V. 
\end{align} 
By the condition $V_h = V_h|_{\Omega_1} \cup V_h|_{\Omega_2}$ we mean that discrete functions, when restricted to one of the sub-domains and extended by zero to the other still belong to the approximation space $V_h$. 
The basic assumption for the discrete inf-sup stability condition \eqref{harutyunyan:eq:infsup} of the corresponding Galerkin approximation \eqref{harutyunyan:eq:weak1h}--\eqref{harutyunyan:eq:weak2h} is the following. 
\begin{theorem} \label{harutyunyan:thm:main}
Assume that there exists a linear operator $\Pi_h : V|_{\Omega_1} \to V_h|_{\Omega_1}$ such that 
\begin{align}
    \|\Pi_h v_1\|_{H^1(\Omega_1)} 
               &\le c_3 \|v_1\|_{H^1(\Omega_1)}, \label{harutyunyan:eq:cond1}\\
    \Pi_h v_1 &= \pi_h v_1 \quad \text{on } \Gamma, \label{harutyunyan:eq:cond2} \\
    \|v-\pi_h v\|_{H^{-1/2}(\Gamma)} &\le c_4 \tfrac{h}{k} \|v\|_{H^{1/2}(\Gamma)}, \label{harutyunyan:eq:cond3}
\end{align}
where $\pi_h : L^2(\Gamma) \to V_h|_{\Omega_1 \cap \Gamma}$ denotes the $L^2$-projection on $\Gamma$. 
Then there exists a constant $0 < \eps <1$, depending only on $c_3$, $c_4$ in \eqref{harutyunyan:eq:cond1}--\eqref{harutyunyan:eq:cond3}, such that the discrete inf-sup condition \eqref{harutyunyan:eq:infsuph} holds with $\beta'=\beta'(\eps)$ whenever $N$ and $h$ are chosen such that
\begin{equation}\label{harutyunyan:eq:Nhk}
    N h/k \le 1-\eps.
\end{equation} 
\end{theorem}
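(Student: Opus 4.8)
The plan is to imitate the continuous inf-sup argument of Remark~\ref{harutyunyan:rem:infsup} at the discrete level, using $\Pi_h$ as a Fortin-type operator. Concretely, for a given multiplier I would build a discrete test function by solving the \emph{same} auxiliary Neumann problem as in the continuous case and then pushing its solution through $\Pi_h$; the only new work is to show that the consistency error introduced by this projection is controlled, and this is exactly where the restriction $Nh/k \le 1-\eps$ comes into play.

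First I would fix $\mu_N \in M_N$ and reuse the continuous construction: let $z_1 \in V|_{\Omega_1}$ solve the mixed problem \eqref{harutyunyan:eq:z1} with datum $\mu_N$, so that, by the two estimates recorded in Remark~\ref{harutyunyan:rem:infsup}, one has $\|z_1\|_{H^1(\Omega_1)} \le c_2\|\mu_N\|_{H^{-1/2}(\Gamma)}$ and $\langle \mu_N, z_1\rangle_\Gamma \ge c_1\|\mu_N\|_{H^{-1/2}(\Gamma)}^2$. I would then set $v_h := \Pi_h z_1$ on $\Omega_1$ and $v_h := 0$ on $\Omega_2$. By the structural assumption $V_h = V_h|_{\Omega_1}\cup V_h|_{\Omega_2}$ this is an admissible element of $V_h$, and \eqref{harutyunyan:eq:cond1} together with the first continuous bound controls the denominator in \eqref{harutyunyan:eq:infsuph} via $\|v_h\|_{H^1(\Omega_1\cup\Omega_2)} = \|\Pi_h z_1\|_{H^1(\Omega_1)} \le c_3 c_2\|\mu_N\|_{H^{-1/2}(\Gamma)}$.

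Next I would evaluate the numerator. Since $v_h$ vanishes on $\Omega_2$, we have $[v_h] = v_h|_\Gamma = \pi_h z_1$ by \eqref{harutyunyan:eq:cond2}, so that
\[
   \langle \mu_N, [v_h]\rangle_\Gamma = \langle \mu_N, z_1\rangle_\Gamma - \langle \mu_N, z_1 - \pi_h z_1\rangle_\Gamma,
\]
where the first term is already bounded below by $c_1\|\mu_N\|_{H^{-1/2}(\Gamma)}^2$. The decisive step is the consistency term. The key trick here is \emph{not} to use the natural $H^{-1/2}$--$H^{1/2}$ pairing, but to shift the regularity onto the smooth multiplier through the $L^2$-pivoted bound $|\langle \mu_N, z_1 - \pi_h z_1\rangle_\Gamma| \le \|\mu_N\|_{H^{1/2}(\Gamma)}\,\|z_1 - \pi_h z_1\|_{H^{-1/2}(\Gamma)}$, which is legitimate because $\mu_N \in M_N$ is a trigonometric polynomial. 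An inverse inequality for functions in $M_N = F(\widehat M_N)$, which gains one Sobolev order at the price of a factor proportional to the trigonometric degree, then gives $\|\mu_N\|_{H^{1/2}(\Gamma)} \le C_{\mathrm{inv}} N\|\mu_N\|_{H^{-1/2}(\Gamma)}$; applying the approximation estimate \eqref{harutyunyan:eq:cond3} and the trace bound $\|z_1\|_{H^{1/2}(\Gamma)} \le C\,\|z_1\|_{H^1(\Omega_1)} \le C c_2\|\mu_N\|_{H^{-1/2}(\Gamma)}$ produces a bound of the form $C'\tfrac{Nh}{k}\|\mu_N\|_{H^{-1/2}(\Gamma)}^2$.

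Combining both contributions yields $\langle \mu_N, [v_h]\rangle_\Gamma \ge (c_1 - C'\tfrac{Nh}{k})\|\mu_N\|_{H^{-1/2}(\Gamma)}^2$, and dividing by the denominator bound shows that the inf-sup quotient is at least $(c_1 - C' Nh/k)/(c_3 c_2)\,\|\mu_N\|_{H^{-1/2}(\Gamma)}$. Choosing $\eps \in (0,1)$ so small that $C'(1-\eps)$ stays strictly below $c_1$, the hypothesis \eqref{harutyunyan:eq:Nhk} makes the bracket positive and gives \eqref{harutyunyan:eq:infsuph} with $\beta'=\beta'(\eps) = (c_1 - C'(1-\eps))/(c_3 c_2)>0$, which degenerates as $\eps \to 0$. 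The hard part will be the consistency estimate: one must pair against $\mu_N$ in the $H^{1/2}$ norm — so that \eqref{harutyunyan:eq:cond3} becomes applicable in $H^{-1/2}$ — and then recover the lost order through the inverse inequality, which is precisely where the factor $N$, and hence the sharp combination $Nh/k$, appears. A secondary technical point is to verify that this inverse inequality, naturally formulated on the reference interface, transfers to $\Gamma$ under the smooth map $F$ with constants independent of $N$.
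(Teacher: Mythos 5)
Your proof is correct and is essentially the paper's own argument: a Fortin-type construction in which the continuous inf-sup test function $z_1$ (extended by zero to $\Omega_2$) is pushed through $\Pi_h$, its trace identified via \eqref{harutyunyan:eq:cond2}, and the resulting loss controlled by combining \eqref{harutyunyan:eq:cond3} with the inverse inequality $\|\mu_N\|_{H^{1/2}(\Gamma)}\le C N\|\mu_N\|_{H^{-1/2}(\Gamma)}$ on $M_N$ — the only (equivalent) bookkeeping difference is that the paper applies the continuous inf-sup condition to the projected multiplier $\pi_h\lambda_N$ and recovers $\|\lambda_N\|_{H^{-1/2}(\Gamma)}$ via a triangle inequality, applying \eqref{harutyunyan:eq:cond3} to $\lambda_N$ itself, whereas you keep $\mu_N$ fixed and place the projection error on the test-function trace $z_1-\pi_h z_1$ inside the duality pairing, which yields the same bound $c_1-C'Nh/k$. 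One wording slip at the end: to ensure $C'(1-\eps)<c_1$ you must choose $\eps$ close to $1$ (namely $\eps>1-c_1/C'$), not ``small'' — your own formula $\beta'(\eps)=\bigl(c_1-C'(1-\eps)\bigr)/(c_2c_3)$, which degenerates as $\eps\to 0$, already shows the correct monotonicity.
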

\begin{proof}
As an immediate consequence of the continuous inf-sup condition \eqref{harutyunyan:eq:infsup}, we can find for $\mu = \pi_h \lambda_N$ a function $z \in V$ with $z=0$ on $\Omega_2$, such that 
\begin{align*}
    \langle \pi_h \lambda_N, [z] \rangle_\Gamma \ge \beta \|z\|_{H^1(\Omega_1 \cup \Omega_2)} \|\pi_h \lambda_N\|_{H^{-1/2}(\Gamma)}. 
\end{align*}
We then define $z_h = \Pi_h z_1$ on $\Omega_1$ and $z_h=0$ on $\Omega_2$, and observe that 
\begin{align*}
    \langle \pi_h \lambda_N, [z_h] \rangle_\Gamma
    &= \langle \pi_h \lambda_N, [\Pi_h z] \rangle_\Gamma
     = \langle \pi_h \lambda_N, \pi_h [z] \rangle_\Gamma
     = \langle \pi_h \lambda_N, [z] \rangle_\Gamma, 
\end{align*}
where we used property \eqref{harutyunyan:eq:cond2} and the orthogonality of the $L^2$-projection $\pi_h$. 
Together with the previous estimate and employing condition \eqref{harutyunyan:eq:cond1}, we thus obtain 
\begin{align*}
    \langle \pi_h \lambda_N, [z_h] \rangle_\Gamma 
     &\ge  \beta \|z\|_{H^1(\Omega_1 \cup \Omega_2)} \|\pi_h \lambda_N\|_{H^{-1/2}(\Gamma)}
     \ge \frac{\beta}{c_3} \|z_h \|_{H^1(\Omega_1 \cup \Omega_2)} \|\pi_h \lambda_N\|_{H^{-1/2}(\Gamma)}.
\end{align*}
Using the triangle inequality, we can further estimate 
\begin{align*}
    \|\pi_h \lambda_N\|_{H^{-1/2}(\Gamma)} \ge \|\lambda_N\|_{H^{-1/2}(\Gamma)} - \|\lambda_N - \pi_h \lambda_N\|_{H^{-1/2}(\Gamma)},    
\end{align*}
and the last term can be bounded with the approximation error estimate \eqref{harutyunyan:eq:cond3} by 
\begin{align*}
    \|\lambda_N - \pi_h \lambda_N\|_{H^{-1/2}(\Gamma)} 
    \le c_4 \tfrac{h}{k} \|\lambda_N\|_{H^{1/2}(\Gamma)} 
    \le C' \tfrac{h}{k} N \|\lambda_N\|_{H^{-1/2}(\Gamma)}.  
\end{align*}
In the second estimate, we here used an inverse inequality for the finite dimensional Lagrange multiplier space $M_N$. In summary, we thus obtain  
\begin{align*}
    \langle \pi_h \lambda_N, [z_h] \rangle_\Gamma 
    \ge \beta (1-C' N h/k) \|z_h\|_{H^1(\Omega_1 \cup \Omega_2)} \|\lambda_N\|_{H^{-1/2}(\Gamma)},  
\end{align*}
from which the assertion of the theorem follows immediately.
\end{proof}
\begin{remark}
The conditions of the theorem hold for a variety of discretization methods, e.g. FEM or IGA.
The projection operator $\Pi_h$ can here be constructed following the ideas of \cite{Clement75,Scott90} or \cite{Buffa16} and the 
approximation property \eqref{harutyunyan:eq:cond3} for $\pi_h$ is well-known; details will be given in a forthcoming publication.
The resulting harmonic-coupling mortar methods are thus stable, if the number of degrees of freedom $n \sim k/h$ located at the interface  exceeds the number of coupling modes $N$ to some extent, cf. \eqref{harutyunyan:eq:Nhk}. 
Our main arguments may be applied to other problems and discretization strategies.
\end{remark}

\vspace*{-2em}

\section{Numerical results} \label{harutyunyan:sec:4}

We now illustrate the theoretical results of Theorem~\ref{harutyunyan:thm:main} by some numerical tests using an IGA discretization \cite{Hughes_2005aa} as implemented in GeoPDEs \cite{Falco_2011aa}.  The geometry used in our computations is depicted in Figure~\ref{harutyunyan:fig:geometry}. 
Following the arguments given in Remark~\ref{harutyunyan:rem:infsup} and underlying the proof of Theorem~\ref{harutyunyan:thm:main}, we have 
\begin{align} \label{harutyunyan:eq:infsup2}
\sup_{v \in V} \frac{\langle \mu,v\rangle_\Gamma}{\|v\|_{H^1(\Omega_1 \cup \Omega_2)}} \ge \sup_{z_1 \in V_1} \frac{\langle \mu,z_1\rangle_\Gamma}{\|z_1\|_{H^1(\Omega_1)}} 
\ge \beta \|\mu\|_{H^{-1/2}(\Gamma)},
\end{align}
where $V=\{v \in H^1(\Omega_1 \cup \Omega_2 : v|_{\Sigma_1 \cup \Sigma_2}=0\}$ and $V_1=\{v \in V : v|_{\Omega_2} = 0\} \subset V$.
Due to the Dirichlet boundary conditions on $\Sigma_1$, we can choose $\|v_1\|_{H^1(\Omega_1)}=\|\nabla v_1\|_{L^2(\Omega_1)}$ as the norm on $V_1$. 
One can then show that the second supremum in \eqref{harutyunyan:eq:infsup2} is attained by the solution $z_1$ of the mixed boundary value problem \eqref{harutyunyan:eq:z1}. 
For our model problem, $\Omega_1=\{x \in \RR^2 : R_1 < |x| < R_2\}$ is a simple annulus with radii $R_1 = 0.0447$ and $R_2=0.0675$, and the solution of the above problem can be computed analytically in the form of a Fourier series, and we define $S_1 \mu := v_1|_{\Gamma}$. 
The largest possible constant $\beta$ such that the second estimate of \eqref{harutyunyan:eq:infsup2} remains true for all $\mu \in H^{-1/2}(\Gamma)$ 
can then be characterized by the minimal eigenvalue of 
\begin{align*}
\langle \mu, S_1 \widetilde \mu\rangle_{H^{-1/2}(\Gamma) \times H^{1/2}(\Gamma)} = \beta^2 (\mu,\tilde \mu)_{H^{-1/2}(\Gamma)}^2 \qquad \forall \tilde \mu \in H^{-1/2}(\Gamma).
\end{align*}
For the problem under consideration, the solution can be computed explicitly which gives $\beta=\sqrt{R_1 \ln(R_2/R_1)} \approx 0.13573$. 
The discrete inf-sup constant is evaluated by numerically solving the corresponding discretized eigenvalue problem. 

\bigskip
In the first series of tests, we utilize the lowest order approximation and consider a sequence of uniformly refined meshes. The discrete inf-sup constant is computed as outlined above. The results of these computations are depicted in Table~\ref{harutyunyan:tab:h}. 
\begin{table}[ht!]
\begin{center}
\vskip-1.5em
\setlength{\tabcolsep}{1em}
\renewcommand{\arraystretch}{1.2}
\scriptsize
\caption{Discrete inf-sup constants obtained for $n$ gridpoints at the interface $\Gamma$ and harmonic order $N=c n$ of the Lagrange-multipliers for different refinement levels $\ell$ and scaling parameters $c$. \label{harutyunyan:tab:h}}
\begin{tabular}{c||c|c|c|c}
$c$ $\setminus$ $\ell$ & 1 & 2 & 3 & 4  \\
\hline
\hline
1/4     & 0.135237 & 0.135556 & 0.135676 & 0.135693 \\
\hline 
1/3     & 0.135237 & 0.135556 & 0.135661 & 0.135684 \\
\hline 
3/8     & 0.135237 & 0.135536 & 0.135611 & 0.135684 \\
\hline 
1/2     & 3.526e-08 & 2.532e-08 & 2.401e-08 & 2.401e-08 
\end{tabular}
\end{center}
\vskip-1.5em
\end{table}
The coarsest mesh has $n=144$ vertices at the interface $\Gamma$ and which is doubled in every refinement step; see Figure~\ref{harutyunyan:fig:geometry}.
For $c=1/2$, we have $\dim(M_N) = 2N+1 = n+1 > n$, and the discrete inf-sup stability condition is violated. 
The results of Table~\ref{harutyunyan:tab:h} thus perfectly agree with the theoretical predictions of Theorem~\ref{harutyunyan:thm:main}. 
In a second sequence of tests, we study the dependence of the inf-sup constant on the polynomial degree $k$ of the spline approximation on the mesh with refinement level $2$. The corresponding results are summarized in Table~\ref{harutyunyan:tab:p}. 
\begin{table}[ht!]
\begin{center}
\vskip-1.5em
\setlength{\tabcolsep}{1em}
\renewcommand{\arraystretch}{1.2}
\scriptsize
\caption{Discrete inf-sup constants for $n$ spline degrees of freedom on the interface $\Gamma$ and harmonic order $N=c n$ of the Lagrange-multipliers for polynomial degree $k$ and scaling parameter $c$.\label{harutyunyan:tab:p}}
\begin{tabular}{c||c|c|c|c}
$c$ $\setminus$ $k$ & 2 & 3 & 4 & 5 \\
\hline
\hline
1/4     & 0.135721 & 0.135723 & 0.135723 & 0.135723 \\
\hline 
1/3     & 0.135721 & 0.135722 & 0.135723 & 0.135723 \\
\hline 
3/8     & 0.135720 & 0.135723 & 0.135723 & 0.135723 \\
\hline 
1/2     & 3.652e-08 & 0 & 8.082e-08 & 1.825e-08 \\ 
\end{tabular}
\end{center}
\vskip-1.5em
\end{table}
For the choice $c=1/2$, the number of Lagrange-multipliers $2N+1 =n+1> n$ again exceeds the number of the spline degrees at the interface $\Gamma$ and the discrete inf-sup stability fails. The computational results are again in perfect agreement with the theoretical  predictions.

\vspace*{-1em}
\begin{acknowledgement}
This work is supported by the `Excellence Initiative' of the German Federal and State Governments and by the Graduate School of Computational Engineering at Technische Universit\"at Darmstadt and the grants TRR~154 project C04 and TRR~146 project C03.
\end{acknowledgement}

\vspace*{-2em}

\end{document}